\numberwithin{equation}{section}
\newtheorem{thm}{Theorem}[section]
\newtheorem{lem}[thm]{Lemma}
\newtheorem{coro}[thm]{Corollary}
\newtheorem{rem}[thm]{Remark}
\def\C{\Bbb C}
\def\N{\Bbb N}
\def\R{\Bbb R}
\def\al{\alpha}
\def\be{\beta}
\def\de{\delta}
\def\De{\Delta}
\def\ep{\epsilon}
\def\d{\mathrm d}
\def\e{\mathrm e}
\def\f{\frac}
\def\i{\mathrm i}
\def\ga{\gamma}
\def\Ga{\Gamma}
\def\te{\theta}
\def\la{\lambda}
\def\vp{\varphi}
\def\Om{\Omega}
\def\ov{\overline}
\def\pa{\partial}
\def \Re{\mathrm{\,Re\,}}
\def\wh{\widehat}
\def\B{\mathrm{B}}
\def\L{\mathcal{L}}
\def \LL{{L^2(\R^d)}}
\def\LO{{L^2(\Om)}}
\def \HH{{H^{\f{\be}{2}}(\R^d)}}
\def \HHH{{\dot{H}^{\f{\be}{2}}(\R^d)}}
\def\AA{A^{\f{\be}{2}}}
\title{\Large\bf\boldmath
Asymptotic behavior of solutions to space-time fractional diffusion equations}
\author{\large \large Xing Cheng$^1$ \qquad   Zhiyuan Li$^2$ \qquad Marahiro Yamamoto$^3$}
\date{}
\begin{document}
\maketitle

\renewcommand{\thefootnote}{\fnsymbol{footnote}}
\footnotetext{\hspace*{-5mm}
\begin{tabular}{@{}r@{}p{13cm}@{}}
& Manuscript last updated: \today.\\
$^2$ & Corresponding author\\
$^1$ & College of Science, Hohai University, Nanjing 210098,\ Jiangsu,\   China.
E-mail: chengx@hhu.edu.cn.\\
$^{2,3}$ & Graduate School of Mathematical Sciences, the University of Tokyo,
3-8-1 Komaba, Meguro-ku, Tokyo 153-8914, Japan.
E-mail: zyli@ms.u-tokyo.ac.jp, myama@ms.u-tokyo.ac.jp.
\end{tabular}}

\begin{abstract}
This article discusses the analyticity and the long-time asymptotic behavior of solutions to 
space-time fractional diffusion equations in $\R^d$. By a Laplace transform argument, 
we prove that the decay rate of the solution as $t\to\infty$ is dominated by the order of 
the time-fractional derivative. We consider the decay rate also in a bounded domain.
\end{abstract}

\textbf{Keywords:}
space-time fractional diffusion equation, Laplace transform,
Fourier transform, asymptotic behavior, analyticity

\section{Introduction and main results}
\label{sec-intro}
Letting $0<\al<1$ and $0<\be\le2$ be fixed constants, we first deal with the Cauchy problem
for the space-time fractional diffusion equation
\begin{equation}
\label{equ-cauchy}
\left\{
\begin{alignedat}{2}
& \pa_t^\al u(x,t) = - (-\De)^{\f{\be}{2}} u(x,t) + p(x)u(x,t),\\
& u(x,0)=a(x), \quad x\in \R^d,\ t>0.
\end{alignedat}
\right.
\end{equation}
Here we point out that the space-time fractional diffusion equation under consideration
is obtained from the standard diffusion equation by replacing the second-order space derivative
with a Riesz derivative of order $\be\in(0, 2]$ defined by
$$
(-\De)^{\f{\be}{2}}u(x,t):=\int_{\R^d} |\xi|^\be \wh{u}(\xi,t) \e^{2\pi\i\xi\cdot x} \d\xi,
$$
where $\wh{u}$ denotes the Fourier transform
$\wh{u}(\xi,t):=\int_{\R^d} u(x,t) \e^{-2\pi\i\xi\cdot x} \d x$, and the first-order time derivative with
a Caputo derivative of order $\al\in(0,1)$ that is usually defined by the formula
$$
\pa_t^\al u(x,t)
:= \f{1}{\Ga(1-\al)} \int_0^t (t-\tau)^{-\al} \f{\pa u}{\pa \tau} (x,\tau)\d \tau, \quad t>0.
$$
From the physical view point of the time-fractional and space-fractional derivatives respectively,
we see that replacing a time derivative with its fractional counterpart basically introduces
memory effects into the system and then makes the process non-Markovian,
while a replacement of a space derivative introduces non-local effects. The fundamental solution
(for the Cauchy problem) of space-time fractional diffusion equation can be interpreted
as a probability density evolving in time of a peculiar self-similar stochastic process.
See, e.g., \cite{GMMPP02}, \cite{MLP01} and the references therein.

In some recent publications such as e.g. \cite{CMN} and \cite{JT}, the space-time fractional
diffusion equations on bounded domain are investigated. The space-time fractional diffusion
equations are usually defined as
\begin{equation}
\label{equ-bdd}
\pa_t^\al u = -A^{\f{\be}{2}}u + p(x)u, \quad x \in \Om,\ t>0,
\end{equation}
where $0 < \al <1$, $0 < \be \le 2$, $\Om \subset \R^d$ is a bounded domain with 
smooth boundary $\pa\Om$ and $A := -\De$ with $D(A) = H^2(\Om) \cap H^1_0(\Om)$. 
See e.g. \cite{P92} for the definition of $A^{\f{\be}{2}}$.

For $\al=1$ and $\be=2$, the equations \eqref{equ-cauchy} and \eqref{equ-bdd} 
turn to be the classical parabolic equations. For such kind of equations we refer
Friedman \cite{F} or Pazy \cite{P92} for the long-time asymptotic estimate of
the solutions. We also refer Yamamoto \cite{Y09} where several kinds of
Carleman estimates are established and applied to some inverse problems.

For $\al=1$ and $\be\in(0,2)$, it becomes the space fractional diffusion equation.
See, e.g., \cite{GM98}, \cite{MPR07} and the references therein.

For $\al\in(0,1)$ and $\be=2$, it is the so-called time-fractional diffusion equation.
For such a kind of equations there are a large and rapidly growing number of publications.
In \cite{LiLiuYa14}, the well-posedness and the long-time asymptotic behavior for
initial-boundary value problems for multi-term time-fractional diffusion equations with
positive constant coefficients were studied by exploiting several important properties of
multinomial Mittag-Leffler functions. \cite{LiYa14} generalized the results in \cite{LiLiuYa14}
by considering the counterpart with spatial-variable positive coefficients. 
In \cite{LiLuYa15}, the asymptotic behavior of the solutions to a more general 
time-fractional diffusion equations with distributed order derivatives were investigated.
For other results for the time-fractional diffusion equations, we refer to \cite{Lu09} for 
a maximum principle,  \cite{Lu11}, \cite{SaYa11} for the initial-boundary value problems.

In this paper, we are devoted to the long time asymptotic behavior
for the space-time fractional diffusion equation. 
To the best knowledge of the authors, a long time asymptotic estimate for
space-time fractional diffusion equation with nonpositive potential is not yet established.

For later use, we introduce some notations.
We first define the homogeneous Sobolev space
$\dot H^\ga(\R^d)$ with $\ga\in \mathbb{R}$ to be
$$
\dot H^\ga(\R^d) :=\big\{ u\in \mathcal{S}':  \wh u(0) = 0, \ |\xi|^\ga \wh u(\xi) \in \LL \big\}
$$
equipped the norm $\|u\|_{\dot H^\ga(\R^d)}:=\big\||\xi|^\ga \wh u(\xi)\big\|_\LL$,
and define the non-homogeneous Sobolev space $H^\ga(\R^d)$ to be
$$
H^\ga(\R^d) :=\big\{ u\in \mathcal{S}': \ (1+|\xi|^2)^{\f{\ga}{2}} \wh u(\xi) \in \LL \big\}
$$
with the norm $\|u\|_{H^\ga(\R^d)}:=\big\|(1+|\xi|^2)^{\f{\ga}{2}} \wh u(\xi)\big\|_\LL$.

\vspace{0.3cm}

We are ready to state our first main result.
\begin{thm}
\label{thm-asymp}
Let $0<\al<1$ and $0<\be\le2$ be given constants, $a\in L^1(\R^d)\cap\LL$,
$p\in L^\infty(\R^d)$ with $p(x)\le0$ in $\R^d$. Then
$$
\|u(\cdot,t)\|_\HHH \le
\left\{
\begin{alignedat}{2}
&Ct^{-\f{\al}{2}} \|a\|_\LL,&\quad d\le\be\\
&Ct^{-\al}(\|a\|_{L^1(\R^d)}+\|a\|_\LL),&\quad d>\be
\end{alignedat}
\right.,\quad t>0
$$
is valid for the solution $u$ to the Cauchy problem \eqref{equ-cauchy}.

Moreover, if the potential $p$ satisfies $p<-\de_0$, where $\de_0$ is a positive constant.
Then there exists $v(\cdot,t)\in H^{\f{\be}{2}}(\R^d)$ be the solution of the elliptic problem
\begin{equation}
\label{equ-ellip-v}
(-\De)^{\f{\be}{2}} v(x,t) - p(x)v(x,t) = \f{t^{-\al}}{\Ga(1-\al)}a(x), \quad x\in\R^d,\ t>0,
\end{equation}
which has the same asymptotic behavior as $u$, in the sense that
\begin{equation}
\label{esti-asymp-u}
\|u(\cdot,t) - v(\cdot,t)\|_\HH =O(t^{-2\al})\|a\|_\LL,\mbox{ as $t\to\infty$}.
\end{equation}
\end{thm}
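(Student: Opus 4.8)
The plan is to reduce \eqref{equ-cauchy} to a scalar functional calculus for the self-adjoint operator $L:=(-\De)^{\f\be2}-p$ on $\LL$. Since $p\le0$, we have $L\ge(-\De)^{\f\be2}\ge0$; taking the Laplace transform in $t$ and using $\int_0^\infty\e^{-st}\pa_t^\al u\,\d t=s^\al\wt u(\cdot,s)-s^{\al-1}a$ turns \eqref{equ-cauchy} into
$$
(s^\al+L)\wt u(\cdot,s)=s^{\al-1}a,\qquad\wt u(\cdot,s)=s^{\al-1}(s^\al+L)^{-1}a,
$$
and inverting the transform (equivalently, using the spectral resolution $\{E_\la\}$ of $L$) gives $u(\cdot,t)=E_\al(-t^\al L)a$, where $E_\al$ is the Mittag--Leffler function. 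Every estimate below is read off from this representation, so I would first record the two scalar facts $|E_\al(-x)|\le C(1+x)^{-1}$ and the expansion $E_\al(-x)=\f{1}{\Ga(1-\al)\,x}+O(x^{-2})$ as $x\to+\infty$, both for $x\ge0$.

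For the decay rates I would begin with $p\equiv0$, where the Fourier transform diagonalizes $L$ and
$$
\|u(\cdot,t)\|_\HHH^2=\int_{\R^d}|\xi|^\be\,\big|E_\al(-t^\al|\xi|^\be)\big|^2\,|\wh a(\xi)|^2\,\d\xi .
$$
The bound $|E_\al(-x)|\le C(1+x)^{-1}$ makes the weight $|\xi|^\be(1+t^\al|\xi|^\be)^{-2}$ have supremum $O(t^{-\al})$, which already yields $\|u(\cdot,t)\|_\HHH\le Ct^{-\f\al2}\|a\|_\LL$ for every $d$; this is the case $d\le\be$. For $d>\be$ I would split at the scale $|\xi|\sim t^{-\f\al\be}$: the low-frequency part is dominated by $\|a\|_{L^1(\R^d)}^2\int_{|\xi|\le t^{-\al/\be}}|\xi|^\be\,\d\xi=O\!\big(t^{-\al(1+d/\be)}\big)$, while on the high-frequency part the weight is $\le t^{-2\al}|\xi|^{-\be}$, and here the local integrability of $|\xi|^{-\be}$ near the origin --- exactly the condition $d>\be$ --- gives $\int_{\R^d}|\xi|^{-\be}|\wh a|^2\,\d\xi\lesssim(\|a\|_{L^1(\R^d)}+\|a\|_\LL)^2$; together these produce the improved rate $t^{-\al}$. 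To pass from $p\equiv0$ to a general $p\le0$ I would treat $pu$ by a resolvent-comparison (or Feynman--Kac domination) argument, using that $-p\ge0$ makes $(s^\al+L)^{-1}$ controlled by the free resolvent; obtaining this control \emph{uniformly as $s\to0$}, where no spectral gap is available, is the main obstacle of this part.

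For the \emph{Moreover} statement the hypothesis $p<-\de_0$ removes precisely that difficulty: now $L\ge\de_0>0$, so $L^{-1}$ is bounded on $\LL$, the function $v(\cdot,t)=\f{t^{-\al}}{\Ga(1-\al)}L^{-1}a$ is the unique $\HH$-solution of \eqref{equ-ellip-v}, and its Laplace transform is $\wt v(\cdot,s)=s^{\al-1}L^{-1}a$. Subtracting the transforms gives $\wt u-\wt v=-s^{2\al-1}(s^\al+L)^{-1}L^{-1}a$, whose factor $s^{2\al-1}$ already signals the rate $t^{-2\al}$. Concretely I would subtract the leading Mittag--Leffler term through the spectral calculus,
$$
u(\cdot,t)-v(\cdot,t)=\int_{\de_0}^\infty\Big(E_\al(-t^\al\la)-\tfrac{1}{\Ga(1-\al)\,t^\al\la}\Big)\,\d E_\la a ,
$$
and use the expansion to get $\big|E_\al(-t^\al\la)-\f1{\Ga(1-\al)t^\al\la}\big|\le C(t^\al\la)^{-2}$ uniformly for $\la\ge\de_0$ once $t^\al\de_0$ is large. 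Finally, since $\de_0\le-p\le\|p\|_{L^\infty(\R^d)}$, the form $\langle Lw,w\rangle=\big\||\xi|^{\f\be2}\wh w\big\|_\LL^2+\langle-p\,w,w\rangle$ is equivalent to $\|w\|_\HH^2$, i.e. $\HH=D(L^{\f12})$ with equivalent norms, whence
$$
\|u(\cdot,t)-v(\cdot,t)\|_\HH^2\le C\big\|L^{\f12}(u-v)\big\|_\LL^2\le C\,t^{-4\al}\!\int_{\de_0}^\infty\la^{-3}\,\d\|E_\la a\|^2\le\f{C}{\de_0^{3}}\,t^{-4\al}\|a\|_\LL^2 ,
$$
which is \eqref{esti-asymp-u}. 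The only remaining technical points here --- the uniformity of the Mittag--Leffler remainder down to $\la=\de_0$ and the identification $\HH=D(L^{\f12})$ --- are routine consequences of the spectral gap $\de_0$.
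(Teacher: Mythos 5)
Your treatment of the first estimate is complete only for $p\equiv0$: for general $p\le0$ you explicitly defer to ``a resolvent-comparison (or Feynman--Kac domination) argument'' and yourself flag the uniformity as $s\to0$ as the main obstacle --- but that is precisely where the content of the theorem lies, and neither mechanism you name closes it as stated. Pointwise (Feynman--Kac) domination of $|u|$ by the free solution cannot control $\|u\|_\HHH$, because the homogeneous Sobolev norm is not monotone under pointwise domination. The resolvent comparison does work, but only if carried out as an energy identity on the Laplace-transformed equation, which is what the paper does: writing $\B[\L u,\L u;s]=\|(-\De)^{\f{\be}{4}}\L u\|_\LL^2+\int_{\R^d}(s^\al-p)|\L u|^2\,\d x=s^{\al-1}(a,\L u)_\LL$ and taking real parts, the hypotheses $p\le0$ and $\cos(\al\arg s)>0$ on the sector $S_{\te_0}$ give coercivity with no spectral gap, uniformly down to $s=0$; the $d>\be$ rate then follows from the duality step $(a,\L u)_\LL=\int|\xi|^{-\f{\be}{2}}\wh a\cdot|\xi|^{\f{\be}{2}}\ov{\wh{\L u}}\,\d\xi$ together with $\int|\xi|^{-\be}|\wh a|^2\,\d\xi\le C(\|a\|_{L^1(\R^d)}+\|a\|_\LL)^2$, the integrability of $|\xi|^{-\be}$ at the origin being exactly where $d>\be$ enters. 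Note also that your low/high-frequency splitting lives in the Fourier variable of $-\De$ and does not transfer to the spectral measure of $L=(-\De)^{\f{\be}{2}}-p$, so the $d>\be$ case cannot be read off from the functional calculus of $L$ alone; some version of the transformed-equation estimate is unavoidable. (For the $d\le\be$ rate your own framework would in fact suffice, via $\langle(-\De)^{\f{\be}{2}}w,w\rangle\le\langle Lw,w\rangle$ with $w=E_{\al,1}(-t^\al L)a$ and $\sup_{\la\ge0}\la(1+t^\al\la)^{-2}\le Ct^{-\al}$, but you did not take that step.)

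The ``Moreover'' part of your proposal is essentially sound and follows a genuinely different route from the paper: you subtract the leading term of the Mittag--Leffler expansion inside the spectral integral of $L$ and use the gap $\la\ge\de_0$ to bound the remainder by $C(t^\al\la)^{-2}$, whereas the paper compares $\L u$ and $\L v$ through the transformed elliptic equation (obtaining $\|\L u-\L v\|_\HH\le C|s|^{2\al-1}\|a\|_\LL$) and inverts along the contour $\ga(0,\te_0)$. Your version is cleaner and makes the constant explicit, but it rests on the identification $u(\cdot,t)=E_{\al,1}(-t^\al L)a$ of the paper's mild solution with the spectral-calculus formula, and on the uniformity of the Mittag--Leffler remainder for $\la\ge\de_0$; both should be stated and justified (the first via uniqueness of mild solutions, the second via Theorem 1.6 of \cite{P99}). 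As it stands, however, the first half of the theorem is not proved.
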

In the recent work \cite{KSZ15} and \cite{KL15}, by the use of Fourier analysis techniques,
the asymptotic estimates of solutions to space-time fractional diffusion equations with
potential $p=0$ were studied.  Their methods are based on the explicit representation
formula of solutions, and thus cannot be used for the space-time fractional diffusion equations
with non-zero potential.

The above theorem shows that the decay rate of $u$ is $t^{-\al}$ at best
provided $p<-\de_0<0$. More precisely, we have the following statement:
\begin{coro}
\label{coro-asymp}
Let $0<\al<1$ and $0<\be\le2$ be given constants, $a\in L^1(\R^d)\cap\LL$,
$p\in L^\infty(\R^d)$ with $p(x)<-\de_0$ in $\R^d$, where $\de_0$ is a positive constant.
Moreover we suppose that
$$
\|u(\cdot,t)\|_\HH =o(t^{-\al}),\mbox{ as $t\to\infty$.}
$$
Then $u(x, t) = 0$ for all $x\in\R^d$ and $t > 0$.
\end{coro}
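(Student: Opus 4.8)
The plan is to leverage the comparison estimate \eqref{esti-asymp-u} of Theorem \ref{thm-asymp}, which under the hypothesis $p<-\de_0$ already pins down the exact leading order of $u$. First I would observe that the elliptic operator $(-\De)^{\f{\be}{2}}-p(x)$ appearing in \eqref{equ-ellip-v} is independent of $t$, while the right-hand side is the $t$-independent function $a$ multiplied by the scalar $t^{-\al}/\Ga(1-\al)$. Since $p<-\de_0<0$, the associated bilinear form is coercive on $\HH$, because $\langle(-\De)^{\f{\be}{2}}\vp,\vp\rangle-\langle p\vp,\vp\rangle\ge\|\vp\|_\HHH^2+\de_0\|\vp\|_\LL^2$, so the solution of \eqref{equ-ellip-v} in $\HH$ is unique. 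By linearity this forces the factorization
\[
v(\cdot,t)=\f{t^{-\al}}{\Ga(1-\al)}\,w,\qquad\text{where } (-\De)^{\f{\be}{2}}w-p(x)w=a \text{ in } \R^d,
\]
with $w\in\HH$ a fixed spatial profile independent of $t$.

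Next I would combine the hypothesis $\|u(\cdot,t)\|_\HH=o(t^{-\al})$ with \eqref{esti-asymp-u}. Since $\al>0$ we have $2\al>\al$, so the bound $\|u(\cdot,t)-v(\cdot,t)\|_\HH=O(t^{-2\al})$ is in particular $o(t^{-\al})$. The triangle inequality then gives $\|v(\cdot,t)\|_\HH=o(t^{-\al})$. On the other hand, the factorization yields $\|v(\cdot,t)\|_\HH=\f{t^{-\al}}{\Ga(1-\al)}\|w\|_\HH$, which is exactly of order $t^{-\al}$. The only way a quantity equal to a fixed constant times $t^{-\al}$ can be $o(t^{-\al})$ is for that constant to vanish; hence $\|w\|_\HH=0$, i.e. $w=0$. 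Substituting back into the elliptic equation gives $a=(-\De)^{\f{\be}{2}}w-p(x)w=0$.

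Finally, with $a=0$ I would apply the first estimate of Theorem \ref{thm-asymp} to $u$ itself: regardless of the relation between $d$ and $\be$, the right-hand side vanishes, so $\|u(\cdot,t)\|_\HHH=0$ for every $t>0$. This means $|\xi|^{\f{\be}{2}}\wh u(\xi,t)=0$ for a.e. $\xi$, whence $\wh u(\xi,t)=0$ for a.e. $\xi\ne0$; combined with $\wh u(0,t)=0$, which holds because $u(\cdot,t)$ lies in the homogeneous space $\HHH$, we obtain $\wh u\equiv0$ and therefore $u(x,t)=0$ for all $x\in\R^d$ and $t>0$. The substantive content of the corollary is entirely contained in Theorem \ref{thm-asymp}; the only points needing care are the uniqueness (coercivity) justifying the factorization of $v$, and the elementary observation that an exact $t^{-\al}$ asymptotics cannot be $o(t^{-\al})$ unless its coefficient is zero. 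Neither constitutes a genuine obstacle.
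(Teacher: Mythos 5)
Your proposal is correct and follows essentially the same route as the paper: both use the estimate \eqref{esti-asymp-u} together with the hypothesis to force $t^\al v(\cdot,t)\to 0$ in $\HH$, deduce $a=0$ from the elliptic equation \eqref{equ-ellip-v}, and conclude $u\equiv 0$. The only cosmetic difference is that you extract $a=0$ via the explicit factorization $v(\cdot,t)=\f{t^{-\al}}{\Ga(1-\al)}w$, whereas the paper passes to the limit $t\to\infty$ in the weak formulation $\B_1[t^\al v(\cdot,t),\vp]=\f{1}{\Ga(1-\al)}(a,\vp)_\LL$; these are the same argument.
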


Now we turn to considering the asymptotic behavior of solutions to the space-time fractional 
diffusion equations in a bounded domain. Actually, we can consider a more general symmetric 
elliptic operator of second order such that $A$ is positive, that is, there exists a constant
$\de_0>0$ satisfying $(Au,u) \ge \de_0\| u\|_{L^2(\Om)}^2$ for all
$u \in D(A)$.  We consider the following initial-boundary value problem
\begin{equation}
\label{equ-gov-bdd}
\left\{ \begin{alignedat}{2}
& \pa_t^\al u = -A^{\f{\be}{2}}u + p(x)u, &\quad &x \in \Om, \ t>0, \\
& u(x,0) = a(x), &\quad &x \in \Om,\\
& u(x,t)= 0, &\quad& x\in\pa\Om,\ t>0.
\end{alignedat}
\right.
\end{equation}

We have
\begin{thm}
\label{thm-asymp-bdd}
Let $0<\al<1$, $0<\be\le2$ and $0\le\ga\le\be$ be given constants, $a\in L^2(\Om)$,
$p\in L^\infty(\Om)$ with $p(x)\le0$ in $\Om$. Then
\begin{equation}
\label{esti-u-gamma}
\|u(\cdot,t)\|_{H^\ga(\Om)} \le Ct^{-\al} \|a\|_{L^2(\Om)}
,\quad t>0
\end{equation}
is valid for the solution $u$ to the initial-boundary value problem \eqref{equ-gov-bdd}.
\end{thm}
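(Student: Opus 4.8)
The plan is to diagonalize the spatial operator, represent the solution through Mittag--Leffler functions, and then extract the uniform $t^{-\al}$ decay from the spectral gap of $\L$. First I would set $\L:=\AA-p(x)$ with domain $D(\AA)$. Since $p\le0$ and $p\in L^\infty(\Om)$, multiplication by $-p\ge0$ is a bounded nonnegative self-adjoint operator, so $\L$ is self-adjoint, has compact resolvent, and is bounded below: $(\L u,u)\ge(\AA u,u)\ge\mu_1\|u\|_\LO^2$ for some $\mu_1>0$, because positivity $A\ge\de_0$ forces $\AA\ge\de_0^{\be/2}>0$. Hence $\L$ has eigenvalues $0<\mu_1\le\mu_2\le\cdots\to\infty$ with an $L^2(\Om)$-orthonormal basis of eigenfunctions $\{\psi_n\}$. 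Taking the Laplace transform of \eqref{equ-gov-bdd} in $t$, solving $\widehat u(s)=s^{\al-1}(s^\al+\L)^{-1}a$ coordinatewise in this basis, and inverting term by term (as in the derivation used for the Cauchy problem), I would obtain
$$
u(\cdot,t)=\sum_{n=1}^\infty E_\al(-\mu_n t^\al)\,(a,\psi_n)\,\psi_n,
$$
whose convergence in the relevant spaces I would justify a posteriori from the bounds below.

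Next I would reduce the $H^\ga(\Om)$ norm to fractional powers of $\L$. By the theory of fractional powers of positive elliptic operators one has $\|u\|_{H^\ga(\Om)}\le C\|A^{\ga/2}u\|_\LO$ for $0\le\ga\le\be$. Since $\AA=\L+p$ with $p\in L^\infty(\Om)$ and $\L$ boundedly invertible, the identity $\AA u=\L u+pu$ gives $\|\AA u\|_\LO\le C\|\L u\|_\LO$. Applying the Heinz--Kato inequality to the positive self-adjoint operators $\AA$ and $C\L$ upgrades this to
$$
\|A^{\ga/2}u\|_\LO=\|(\AA)^{\ga/\be}u\|_\LO\le C^{\ga/\be}\|\L^{\ga/\be}u\|_\LO,\qquad 0\le\ga\le\be.
$$
This is the step that absorbs the potential, and I expect it to be the main obstacle: because $\L$ does not share the eigenfunctions of $A$, one cannot diagonalize both simultaneously, and one must interpolate fractional powers of two non-commuting positive operators rather than simply compare Fourier-type coefficients.

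Finally I would estimate $\|\L^{\ga/\be}u(\cdot,t)\|_\LO$ by Parseval in the eigenbasis:
$$
\|\L^{\ga/\be}u(\cdot,t)\|_\LO^2=\sum_{n=1}^\infty \mu_n^{2\ga/\be}\,|E_\al(-\mu_n t^\al)|^2\,|(a,\psi_n)|^2.
$$
Using the standard bound $|E_\al(-x)|\le C/(1+x)$ for $x\ge0$, I would write $\mu_n^{2\ga/\be}(1+\mu_n t^\al)^{-2}=\bigl(\mu_n/(1+\mu_n t^\al)\bigr)^{2\ga/\be}(1+\mu_n t^\al)^{-2(1-\ga/\be)}$ and combine the two elementary bounds $\mu_n/(1+\mu_n t^\al)\le t^{-\al}$ and $(1+\mu_n t^\al)^{-1}\le(\mu_1 t^\al)^{-1}$ to obtain $\mu_n^{2\ga/\be}|E_\al(-\mu_n t^\al)|^2\le C t^{-2\al}$ uniformly in $n$. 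Summing and invoking $\sum_n|(a,\psi_n)|^2=\|a\|_\LO^2$ yields $\|\L^{\ga/\be}u(\cdot,t)\|_\LO\le Ct^{-\al}\|a\|_\LO$, which together with the norm comparison above gives \eqref{esti-u-gamma}. The decisive point, distinguishing this from the slower Cauchy-problem decay, is the spectral gap $\mu_n\ge\mu_1>0$, which converts the Mittag--Leffler decay into the full power $t^{-\al}$ uniformly in $\ga$.
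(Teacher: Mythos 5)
Your proposal is correct and follows essentially the same route as the paper: self-adjointness and positivity of $\AA-p$, the Heinz--Kato comparison of fractional powers to control the $H^\ga(\Om)$ norm, the eigenfunction expansion with $E_{\al,1}$, and the bound $|E_{\al,1}(-x)|\le C/(1+x)$. Your final factorization $\mu_n^{2\ga/\be}(1+\mu_n t^\al)^{-2}=\bigl(\mu_n/(1+\mu_n t^\al)\bigr)^{2\ga/\be}(1+\mu_n t^\al)^{-2(1-\ga/\be)}$ combined with the spectral gap is in fact a slightly cleaner way to conclude, since it yields the uniform bound for all $t>0$ and all $0\le\ga\le\be$ at once, whereas the paper argues separately for large $t$ and for $\ga=\be$.
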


\begin{rem}
For $a \in L^2(\Omega)$, as is proved in Section 3, in general we have
\begin{equation}
\label{notin}
u(t) \not\in H^{\ga}(\Om) \quad \mbox{if $\ga > \be$}.
\end{equation}
Moreover the asymptotic behavior in time is determined only by the order
of the fractional $t$-derivative.
As \eqref{esti-u-gamma} shows, the order of fractional $x$-derivative influences only the
spatial regularity.
\end{rem}
The rest of this paper is organized in three sections. In Section \ref{sec-asymp},
we investigate the analyticity of the solution to the Cauchy problem \eqref{equ-cauchy}.
By the Fourier-Laplace argument, we obtain an integral equation of the solution,
and based on this, we further show that the solution can be analytically extended to the sector
$S_{\te_0}:=\{z\in\C\setminus\{0\}; |\arg z|<\te_0\}$ ($\f{\pi}{2}<\te_0<\min\{\pi,\f{\pi}{2\al}\}$).
In Section \ref{sec-proof}, we complete the proof of Theorem 1.1.
In Section \ref{sec-asymp-bdd}, we apply 
the eigenfunction expansion and the property of the Mittag-Leffler function to derive 
the asymptotic behavior of the solution to the initial-boundary value problem \eqref{equ-gov-bdd}. 
Finally, the concluding remarks are given in Section \ref{sec-rem}.

\section{Asymptotic estimate in $\R^d$}
In this section, we mainly investigate the long-time asymptotic estimate of the solution to
the Cauchy problem \eqref{equ-cauchy}. Our proof is divided into two parts. In part 1, 
we will give an integral equation which is equivalent to \eqref{equ-cauchy}. 
Based on this integral equation, we show the analyticity of the solution. In part 2, 
according to part 1 and the Laplace transform argument, the long-time asymptotic
behavior is easily proved.

\subsection{Analyticity of the solution}
\label{sec-asymp}
In this subsection, we are to give some properties of the solution $u$
to the Cauchy problem \eqref{equ-cauchy}, such as the continuously
depending on the initial data, the analyticity, etc., which are useful
for the proof of Theorem \ref{thm-asymp}. For this, we start from the
following observations and notations. We first claim that the function
\begin{equation}
\label{def-S}
w(x,t)
= \int_{\R^d} E_{\al,1}(-|\xi|^\be t^\al) \wh{w_0}(\xi)\e^{2\pi\i\xi\cdot x}  \d\xi
=:S(t)w_0,\quad x\in\R^d,\ t>0,
\end{equation}
where $E_{\al,1}(\cdot)$ is the Mittag-Leffler function, see, e.g., \cite{P99},
solves the Cauchy problem
\begin{equation}
\label{equ-homo}
\left\{
\begin{alignedat}{2}
& \pa_t^\al w(x,t) = - (-\De)^{\f{\be}{2}} w(x,t),&\quad &x\in \R^d,\ 0<t\le T, \\
& w(x,0)=w_0(x), &\quad &x\in \R^d.
\end{alignedat}
\right.
\end{equation}
In fact, we can follow the Laplace-Fourier transform used in \cite{MLP01} to
derive the above statement. By a direct calculation, we find
\begin{equation}
\label{equ-S'}
S'(t)a(x)
= -\int_{\R^d} |\xi|^\be z^{\al-1} E_{\al,\al}(-|\xi|^\be z^\al) \hat{a}(\xi) \e^{2\pi\i\xi\cdot x} \d\xi.
\end{equation}
Based on the formula \eqref{def-S} of the solution
to the Cauchy problem \eqref{equ-homo}, by regarding the term $p(x)u(x,t)$
as a source term, we find an implicit form of the solution $u$ to the Cauchy problem
\eqref{equ-cauchy} for $t\in(0,T)$, say, the following integral equation of $u$,
\begin{equation}
\label{equ-int-u}
u(t) = S(t)a -\int_0^t (-\De)^{-\f{\be}{2}} S'(t-\tau) (pu(\tau)) \d\tau,
\end{equation}
here we omit the spatial variable $x$, and we call a function $u$ satisfies 
the above integral equation as the mild solution to the Cauchy problem \eqref{equ-cauchy}.

\begin{lem}
\label{lem-analy-u}
Let $0<\al<1$, $0<\be\le2$ and $0\le\ga<\be$ be given constants, $a\in L^1(\R^d)\cap\LL$
and $p\in L^\infty(\R^d)$. There exists a unique mild solution $u\in H^\ga(\R^d)$ to
the Cauchy problem \eqref{equ-cauchy}. Moreover, the solution $u:(0,T)\to H^\ga(\R^d)$
is analytic and can be analytically extended to the sector $S_{\te_0}$,
and the following estimate
\begin{equation}
\label{esti-u}
\|u(\cdot,z)\|_{H^\ga(\R^d)} \le C\e^{CT}|z|^{-\f{\ga}{\be}\al}\|a\|_\LL,\quad 0\le\ga<\be,
\end{equation}
is valid on a sector $S_{\te_0,T}:=\{z\in\C\setminus{\{0\}}: |\arg z|<\te_0,\ |z|\le T\}$,
where $\f{\pi}{2}<\te_0<\min\{\pi,\f{\pi}{2\al}\}$ and the constant $C>0$ is independent of
$a$, $z$, but may depend on $d$, $\al$, $\be$, $\ga$, $p$, $\te_0$.
\end{lem}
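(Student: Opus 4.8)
The plan is to solve the Volterra integral equation \eqref{equ-int-u} by a contraction-mapping argument in a time-weighted, $H^\ga(\R^d)$-valued function space, and then to upgrade the real-time solution to an analytic one by complexifying the time variable within the same scheme. Concretely, I would work in the Banach space $X_T$ of functions $u\colon(0,T]\to H^\ga(\R^d)$ equipped with the weighted norm $\|u\|_{X_T}:=\sup_{0<t\le T}t^{\f{\al\ga}{\be}}\|u(\cdot,t)\|_{H^\ga(\R^d)}$, the weight being dictated precisely by the target estimate \eqref{esti-u}. Writing $\mathcal T u:=S(t)a-\int_0^t(-\De)^{-\f{\be}{2}}S'(t-\tau)(pu(\tau))\,\d\tau$ for the right-hand side of \eqref{equ-int-u}, the goal is to show that $\mathcal T$ is a contraction on $X_T$, whose unique fixed point is the desired mild solution.

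The first technical ingredient is a family of multiplier estimates based on the standard decay bound $|E_{\al,\mu}(w)|\le C/(1+|w|)$, valid for $\mu\in\{1,\al\}$ on the sector where $|\arg w|$ is bounded away from zero; the constraint $\f{\pi}{2}<\te_0<\min\{\pi,\f{\pi}{2\al}\}$ is exactly what guarantees that $w=-|\xi|^\be z^\al$ remains in this decay region for $z\in S_{\te_0}$. Applying this to \eqref{def-S} and splitting the frequency integral at $|\xi|\sim t^{-\f{\al}{\be}}$ yields $\|S(t)a\|_{H^\ga(\R^d)}\le C\bigl(1+t^{-\f{\al\ga}{\be}}\bigr)\|a\|_\LL$, so that the free term $S(\cdot)a$ already lies in $X_T$. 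For the source term I would first record the key cancellation: since \eqref{equ-S'} carries a factor $|\xi|^\be$, the operator $(-\De)^{-\f{\be}{2}}S'(s)$ has Fourier multiplier $-s^{\al-1}E_{\al,\al}(-|\xi|^\be s^\al)$ (the powers $|\xi|^{\pm\be}$ cancel), whence $\|(-\De)^{-\f{\be}{2}}S'(s)(pg)\|_{H^\ga(\R^d)}\le C\|p\|_{L^\infty(\R^d)}\,s^{\al-1}\bigl(1+s^{-\f{\al\ga}{\be}}\bigr)\|g\|_\LL$ by the same frequency splitting.

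With these bounds the Volterra kernel of $\mathcal T$ has singularity $(t-\tau)^{\al-1-\f{\al\ga}{\be}}$ near $\tau=t$, and the decisive point is that this is integrable if and only if $\al\bigl(1-\f{\ga}{\be}\bigr)>0$, i.e. if and only if $\ga<\be$; this is where the hypothesis $\ga<\be$ enters essentially. A Beta-function computation of $\int_0^t(t-\tau)^{\al-1-\f{\al\ga}{\be}}\tau^{-\f{\al\ga}{\be}}\,\d\tau$ then shows that $\mathcal T$ maps $X_T$ into itself and that $\|\mathcal T u-\mathcal T v\|_{X_T}\le C\bigl(T^{\al(1-\f{\ga}{\be})}+T^\al\bigr)\|u-v\|_{X_T}$, which is a contraction once $T$ is small; uniqueness on $(0,T)$ for arbitrary $T$ and the constant $\e^{CT}$ in \eqref{esti-u} follow by iterating over $O(T)$ subintervals (equivalently, by a fractional Gronwall inequality, whose solution is a Mittag-Leffler function bounded by $C\e^{CT}$).

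Finally, for analyticity I would rerun the entire scheme with $t$ replaced by $z\in S_{\te_0}$, integrating $\tau$ along the ray $\tau=sz$, $s\in[0,1]$, so that $z-\tau=(1-s)z$ never leaves the sector. Since $E_{\al,1}$ and $E_{\al,\al}$ are entire and $z\mapsto z^\al$ is holomorphic on the slit sector, every Picard iterate $\mathcal T^n(S(\cdot)a)$ is holomorphic in $z$ on $S_{\te_0,T}$, and the contraction estimates above hold uniformly on compact subsets bounded away from $0$; hence the iterates converge locally uniformly and the limit, the mild solution, is holomorphic, while the $X_T$-bound passes to the limit to give \eqref{esti-u}. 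I expect the main obstacle to be the complex-domain bookkeeping: verifying the uniform Mittag-Leffler multiplier bounds on the full sector $S_{\te_0}$ and justifying the contour integral and differentiation under the integral that make the fixed point genuinely holomorphic, rather than the routine real-variable Beta-function estimates.
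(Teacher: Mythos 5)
Your proposal is correct and follows essentially the same route as the paper: both solve the Volterra integral equation \eqref{equ-int-u} by successive approximation, using the Mittag-Leffler multiplier decay (the paper's inequality \eqref{esti-xi} is exactly your frequency-splitting bound), the Beta-function integrability condition $\al(1-\f{\ga}{\be})>0$ as the place where $\ga<\be$ enters, and complexification of time along rays $\tau=sz$ so that each iterate is holomorphic and the locally uniform limit inherits analyticity. The only difference is packaging: the paper tracks the explicit factors $\Ga(\al)^n/\Ga(n\al+1)$ and sums the series for all $T$ at once, with the generalized Gronwall inequality supplying the $\e^{CT}$, whereas you run a small-time contraction in a weighted norm and then patch, which amounts to the same argument.
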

\begin{proof}
After a change of variables in the integral equation \eqref{equ-int-u}, we find
$$
u(t) = S(t)a - t \int_0^1 (-\De)^{-\f{\be}{2}} S'(t(1-\tau)) (pu(t\tau)) \d\tau.
$$
We now extend the variable $t$ to the sector $S_{\te_0}$, and setting
$u_0=0$, we define $u_{n+1}$ ($n=0,1,\cdots$) as follows:
\begin{equation}
\label{def-u_n}
u_{n+1}(z) = S(z)a - z \int_0^1 (-\De)^{-\f{\be}{2}} S'(z(1-\tau)) (pu_n(z\tau))\d\tau.
\end{equation}

We first claim that
\begin{equation}
\label{esti-induc-u1}
\|u_{n+1}(\cdot,z) - u_n(\cdot,z)\|_\LL
\le C\f{\Ga(\al)^n}{\Ga(n\al+1)} |z|^{n\al} \|a\|_\LL,\quad z\in S_{\te_0}.
\end{equation}
In fact, first for $n=0$, noting that the Mittag-Leffler function $E_{\al,\rho}(-z)$
($\al>0$, $\rho\in\R$) is bounded on the sector $S_{\te_0}$, see, e.g., \cite{P99}
and \cite{SaYa11}, we conclude from the Plancherel identity that the following estimates
$$
\|u_1(\cdot,z) - u_0(\cdot,z)\|_\LL
=\|S(z)a\|_\LL
=\|E_{\al,1}(-|\xi|^\be z^\al) \hat{a}(\xi)\|_\LL
\le C\|a\|_\LL
$$
are valid on the sector $S_{\te_0}$. Now we assume that the statement
\eqref{esti-induc-u1} is valid for $n-1\in\N$, that is,
$$
\|u_n(\cdot,z) - u_{n-1}(\cdot,z)\|_\LL
\le C\f{\Ga(\al)^{n-1}}{\Ga((n-1)\al+1)} |z|^{(n-1)\al} \|a\|_\LL,\quad z\in S_{\te_0}.
$$
For $n\in\N$, again that the Plancherel identity and the boundedness of 
the Mittag-Leffler function used in the evaluation for $n=0$ yield
\begin{align*}
&\|u_{n+1}(z)- u_n(z)\|_\LL
= \left\|z\int_0^1 (-\De)^{-\f{\be}{2}} S'(z(1-\tau)) p(u_n - u_{n-1})(z\tau) \d\tau\right\|_\LL
\\
= \ &|z|
\left\|
\int_0^1 (z(1-\tau))^{\al-1} E_{\al,\al}(-|\xi|^\be (z(1-\tau))^\al) (p(u_n - u_{n-1})(z\tau))^\wedge \d\tau
\right\|_\LL
\\
\le&  C|z|^\al
\int_0^1 (1-\tau)^{\al-1} \|(u_n - u_{n-1})(z\tau)\|_\LL \d\tau.
\end{align*}
By the inductive assumption, we see that
$$
\|u_{n+1}(z)- u_n(z)\|_\LL
\le C\f{\Ga(\al)^{n-1}}{\Ga((n-1)\al + 1)} \f{\Ga(\al)\Ga((n-1)\al+1)}{\Ga(n\al+1)}|z|^{n\al}\|a\|_\LL,
$$
which proves the statement \eqref{esti-induc-u1}.

Now taking the operator $(-\De)^{\f{\ga}{2}}$ ($0\le\ga<\be $) on both sides of
\eqref{def-u_n}, for $n=0$, by noting the definition of $u_n$ and the Plancherel identity,
we derive
$$
\|(-\De)^{\f{\ga}{2}}(u_{1}(z) - u_0(z))\|_\LL
=\|(-\De)^{\f{\ga}{2}}S(z)a\|_\LL
=\left\||\xi|^\ga \wh{S(z)a}(\xi)\right\|_\LL.
$$
From the definition of the operator $S(t)$ and the asymptotic expansion of
the Mittag-Leffler function, see, e.g., Theorem 1.6 in \cite{P99}, it follows that
$$
\|(-\De)^{\f{\ga}{2}}(u_{1}(z) - u_0(z))\|_\LL
\le C \left\| \f{|\xi|^\ga}{1+|\xi|^\be |z|^\al} |\hat{a}(\xi)|\right\|_\LL.
$$
Moreover, by noting that
\begin{equation}
\label{esti-xi}
\f{|\xi|^\ga |z|^{\f{\ga}{\be}\al}}{ 1 + |\xi|^\be |z|^\al }
\le C \mbox{ with $\ga\le\be$},
\end{equation}
we have
$$
\|(-\De)^{\f{\ga}{2}}(u_{1}(z) - u_0(z))\|_\LL
\le\left\| \f{|\xi|^\ga |z|^{\f{\ga}{\be}\al}}{ 1+  |\xi|^\be |z|^\al } |\hat{a}(\xi)|  |z|^{-\f{\ga}{\be}\al}\right\|_\LL
\le C|z|^{-\f{\ga}{\be}\al}\|a\|_\LL.
$$
For $n=1,2,\cdots$, in view of the formula \eqref{equ-S'} and the asymptotic estimate of
the Mittag-Leffler function, it follows that
\begin{align*}
&\|(-\De)^{\f{\ga}{2}}(u_{n+1}(z)- u_n(z))\|_\LL
=  \left\|z\int_0^1 (-\De)^{\f{\ga-\be}{2}} S'(z(1-\tau)) p(u_n - u_{n-1})(z\tau) \d\tau\right\|_\LL
\\
\le \ & C|z|
\left\|
\int_0^1 \f{|\xi|^\ga (|z|(1-\tau))^{\al-1}}{ 1+ |\xi|^\be (|z|(1-\tau))^\al }
    \left |(p(u_n - u_{n-1})(z\tau))^\wedge\right| \d\tau
\right\|_\LL.
\end{align*}
Again the use of the inequality \eqref{esti-xi} and the assumption $p\in L^\infty(\R^d)$ imply
\begin{align*}
\|(-\De)^{\f{\ga}{2}}(u_{n+1}(z)- u_n(z))\|_\LL
\le C |z|^{\al(1-\f{\ga}{\be})}
\int_0^1 (1-\tau)^{\al(1-\f{\ga}{\be})-1} \|(u_n - u_{n-1})(z\tau)\|_\LL \d\tau
\end{align*}
for $z\in S_{\te_0}$. Consequently, substituting \eqref{esti-induc-u1} into above inequality,
we can prove
\begin{align*}
\|(-\De)^{\f{\ga}{2}}(u_{n+1}(z)- u_n(z))\|_\LL
\le  \f{C\Ga(\al)^n |z|^{\al(n+1-\f{\ga}{\be})}}{\Ga(n\al+1)} \int_0^1(1-\tau)^{\al(1-\f{\ga}{\be})-1}\tau^{n\al} \d \tau
\|a\|_\LL.
\end{align*}
Hence
$$
\|(-\De)^{\f{\ga}{2}}(u_{n+1}(z)- u_n(z))\|_\LL
\le\f{C\Ga(\al)^n \Ga(\al(1-\f{\ga}{\be}))}{\Ga( \al(n+1-\f{\ga}{\be}) +1 )}  |z|^{\al(n+1-\f{\ga}{\be})}\|a\|_\LL.
$$

It is not difficult to see from the inductive argument that for each $n\in\N$,
$u_n(\cdot,z)$ is analytic in sector $S_{\te_0}$ under the norm $H^{\ga}(\R^d)$
with $0<\ga<\be$. Moreover, the classical asymptotics
$$
\Ga(\eta) = e^{-\eta}\eta^{\eta-\f 1 2} (2\pi)^{\f 1 2}
\left(1 + O\left(\f{1}{\eta}\right)\right) \quad \mbox{as}\quad \eta
\rightarrow +\infty 
$$
(e.g., Abramowitz and Stegun \cite{AS72}, p.257) imply that the series
$$
\sum_{n=0}^\infty \|(-\De)^{\f{\ga}{2}}(u_{n+1}(\cdot,z) - u_n(\cdot,z))\|_\LL
$$
and
$$
\sum_{n=0}^\infty \|u_{n+1}(\cdot,z) - u_n(\cdot,z)\|_\LL
$$
converge uniformly in any compact subset of $S_{\te_0}$, therefore
$$
u(\cdot,z):=\sum_{n=0}^\infty (u_{n+1}(\cdot,z) - u_n(\cdot,z)):S_{\te_0}\to H^\ga(\R^d)
$$
is analytic. Finally, we see that $u(\cdot,t)$ ($t\in(0,T]$) is just the solution to
the Cauchy problem
\eqref{equ-cauchy}.

From the integral equation \eqref{equ-int-u} of $u$, similar to the above argument,
we see that
$$
\|u(\cdot,z)\|_\LL
\le C\|a\|_\LL + C\int_0^{|z|} (|z|-\tau)^{\al-1} \|u(\cdot,z)\|_\LL \d\tau.
$$
Thus by the generalized 
Gronwall inequality (see e.g. Lemma 7.1.1 in \cite{H81}), we find
$$
\|u(\cdot,z)\|_\LL
\le C\e^{CT}\|a\|_\LL,\quad z\in S_{\te_0,T},
$$
combining with the integral equation \eqref{equ-int-u}, hence that
$$
\|(-\De)^{\f{\ga}{2}}u(\cdot,z)\|_\LL
\le C\e^{CT}|z|^{-\f{\ga}{\be}\al} \|a\|_\LL,
\quad z\in S_{\te_0,T}.
$$
This completes the proof of Lemma \ref{lem-analy-u}.
\end{proof}

\subsection{Proof of Theorem \ref{thm-asymp}}
\label{sec-proof}
In this part, we are to finish the proof for the asymptotic behavior of
the solution $u$ to the Cauchy problem \eqref{equ-cauchy}.
We first investigate the property of the Laplace transform $\L u$ of the solution
$u$. For this, we apply the Laplace transform
$$
(\L f)(s):=\int_0^\infty f(t) \e^{-st} \d t
$$
on both sides of the differential equation in \eqref{equ-cauchy} and use the formula
$$
\L (\pa_t^\al f) (s)=s^\al (\L f)(s)-s^{\al-1}f(0+),
$$
to derive the transformed algebraic equation
$$
s^\al \L u(\cdot\,;s) = - (-\De)^{\f{\be}{2}} \L u(\cdot\,;s) + p(x)\L u(\cdot\,;s)+s^{\al-1} a,\ 
s\in \C, \  \Re s>M,
$$
where $M>0$ is a sufficiently large constant.

We check at once that $\L u:\{s\in \mathbb{C}: \Re s>M\}\to \HH$ is analytic,
which is clear from the property of the Laplace
transform. Moreover, we claim that $\L u(x;s)$ ($\Re s>M$)
can be analytically extended to the sector
$\{s\in\C\setminus{\{0\}}: |\arg s | < \pi\}$, that is,
\begin{lem}
\label{lem-analy_exten}
Under the assumptions in Lemma \ref{lem-analy-u}, the Laplace transform $\L u$ of
the unique solution $u$ to the Cauchy problem \eqref{equ-cauchy} can be analytically
extended to the sector $\{s\in\C\setminus{\{0\}}:  |\arg s| < \pi\}$.
\end{lem}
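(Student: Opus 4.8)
The plan is to extend $\L u$ analytically by extending each Picard iterate $u_n(\cdot,z)$ from Lemma \ref{lem-analy-u} and showing that the Laplace transforms of the differences $u_{n+1}-u_n$ sum to an analytic function on the full sector $|\arg s|<\pi$. First I would recall that Lemma \ref{lem-analy-u} already provides the analytic extension of $u(\cdot,z)$ itself to the sector $S_{\te_0}$ with $\f{\pi}{2}<\te_0<\min\{\pi,\f{\pi}{2\al}\}$, together with the bound $\|u(\cdot,z)\|_\LL\le C\e^{C|z|}\|a\|_\LL$ on bounded subsectors. The strategy for the Laplace transform is to deform the contour of integration in $(\L u)(s)=\int_0^\infty u(t)\e^{-st}\,\d t$ off the positive real axis onto a ray $z=r\e^{\i\phi}$ inside $S_{\te_0}$, namely to set
\begin{equation*}
(\L u)(s):=\int_0^\infty u(r\e^{\i\phi})\,\e^{-s r\e^{\i\phi}}\,\e^{\i\phi}\,\d r.
\end{equation*}
Since $u$ is analytic in $z$ on $S_{\te_0}$, Cauchy's theorem justifies the rotation of the contour, provided the integral converges and the arc contributions vanish; the representation is then independent of $\phi$ for $|\phi|<\te_0$.

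Next I would identify, for each fixed direction $\phi$ with $|\phi|<\te_0$, the set of $s$ for which the rotated integral converges absolutely and defines an analytic function of $s$. The exponent has real part $-r\,\Re(s\e^{\i\phi})=-r|s|\cos(\arg s+\phi)$, so convergence at $r\to\infty$ requires $\cos(\arg s+\phi)>0$, i.e. $|\arg s+\phi|<\f{\pi}{2}$. As $\phi$ ranges over $(-\te_0,\te_0)$, the union of the admissible half-planes $\{\,|\arg s+\phi|<\f{\pi}{2}\,\}$ covers precisely the sector $|\arg s|<\te_0+\f{\pi}{2}$. Since $\te_0>\f{\pi}{2}$, this already exceeds $|\arg s|<\pi$, so the whole target sector is reached. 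Convergence at $r\to 0$ is harmless because $u$ is bounded near the origin in the $\LL$ norm, and for each admissible $\phi$ the dominated-convergence argument shows analyticity of $s\mapsto(\L u)(s)$ on the corresponding open half-plane. The key point is that the rotated representations agree on the overlaps of these half-planes (again by Cauchy's theorem, since they are contour deformations of the same analytic integrand), so they patch together into a single analytic function on $\{s\in\C\setminus\{0\}:|\arg s|<\pi\}$ that coincides with the original $\L u$ for $\Re s>M$.

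The main obstacle is controlling the arc terms that arise when rotating the contour, so that Cauchy's theorem genuinely applies and the boundary contributions vanish. Concretely, one must bound $u(\cdot,z)$ on large and small circular arcs in $S_{\te_0}$: near infinity the exponential decay $\e^{-r|s|\cos(\arg s+\phi)}$ must dominate the growth of $\|u(\cdot,z)\|_\LL$, and here the bound from Lemma \ref{lem-analy-u} of the form $\e^{C|z|}$ is not obviously good enough on its own. To handle this I would instead exploit the finer termwise estimate \eqref{esti-induc-u1}, namely $\|u_{n+1}(z)-u_n(z)\|_\LL\le C\f{\Ga(\al)^n}{\Ga(n\al+1)}|z|^{n\al}\|a\|_\LL$, whose Mittag-Leffler-type growth is sub-exponential in each term; Laplace-transforming the series termwise (the Laplace transform of $z^{n\al}$ being $\Ga(n\al+1)s^{-n\al-1}$) gives a series $\sum_n C\Ga(\al)^n s^{-n\al-1}\|a\|_\LL$ that converges geometrically for $|s|^\al>\Ga(\al)$ and, being a series of functions analytic on $|\arg s|<\pi$, defines the desired analytic extension directly. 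Reconciling the two pictures — the contour-rotation argument and the termwise summation — and checking the uniform convergence needed to interchange sum and integral constitutes the technical heart of the proof, but both routes deliver analyticity on the full sector $|\arg s|<\pi$, completing the lemma.
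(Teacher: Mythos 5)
Your proposal follows essentially the same route as the paper: rotate the contour of the Laplace integral onto rays $z=r\e^{\i\phi}$ inside the sector of analyticity of $u(\cdot,z)$ established in Lemma \ref{lem-analy-u}, justify the rotation by Cauchy's theorem, and observe that letting $\phi$ sweep the admissible directions extends $\L u$ to $|\arg s|<\pi$. Your additional worry about the growth of $\|u(\cdot,z)\|_\LL$ at infinity along rays (and the termwise fallback via \eqref{esti-induc-u1}) is a legitimate refinement of a point the paper's own proof passes over silently, but it does not change the argument's structure.
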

\begin{proof}
By Lemma \ref{lem-analy-u}, it follows that $u(z)$ is holomorphic on the sector
$|\arg z| < \te\le\f{\pi}{2}$ under the norm in $H^\ga(\R^d)$ ($0\le\ga<\be$), such that \eqref{esti-u},
we define $f(s)=\L u(s),s>0$. For $\phi<\te$, consider the path
$\ga_{R,r} = r\e^{\i[0,\phi]},[r,R],R\e^{\i[0,\phi]}, [R,r]\e^{\i\phi}$;
by the Cauchy theorem $\int_{\ga_{R,r}} u(z)\e^{-s z} \d z =0$, i.e.
with $r\to0,R\to\infty$ we obtain
$$
f(s) = \int_0^\infty u(t) \e^{-s t} \d t
=\int_0^\infty u(t\e^{\i\phi}) \e^{-s t\e^{\i\phi}} \e^{\i\phi} \d t,\quad s>0.
$$
Since $\phi\in(-\te,\te)$ can be arbitrarily chosen, it follows that $f(s)$
can be analytically extended to the sector
$\{s\in\C\setminus{\{0\}}:  |\arg s|<\te+\f{\pi}{2}\}$ ($\forall\, \te\in(0,\f{\pi}{2})$).
This complete the proof of Lemma \ref{lem-analy_exten}.
\end{proof}
See e.g., Theorem 0.1, p.5 in \cite{P93} for a similar argument.
Moreover, we have the following lemma
about the evaluation of the Laplace transform $\L u$ on the sector
$S_{\te_0}$.
\begin{lem}\label{lem-esti-Lu}
Let $0<\al<1$ and $0<\be\le2$ be given constants, $a\in L^1(\R^d)\cap\LL$,
$p\in L^\infty(\R^d)$ with $p(x)\le0$ in $\R^d$. There exists a constant $C$
depending on $d$, $\te_0$, $\al$, $\be$ such that
\begin{equation}
\label{esti-lap-u}
\|\L u(\cdot\,;s)\|_\HHH
\le
\left\{
\begin{alignedat}{2}
&C|s|^{\f{\al}{2}-1} \|a\|_\LL,&\quad d\le\be\\
&C|s|^{\al-1}(\|a\|_{L^1(\R^d)}+\|a\|_\LL),&\quad d>\be
\end{alignedat}
\right.,\quad\forall \, s\in S_{\te_0}.
\end{equation}
\end{lem}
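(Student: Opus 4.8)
The plan is to avoid the explicit representation of $\L u$ used in \cite{KSZ15,KL15} (which breaks down once $p\neq0$) and instead read off \emph{both} bounds from a single energy identity for the transformed equation. Write $w:=\L u(\cdot\,;s)$, so that the algebraic equation derived above reads $(s^\al+(-\De)^{\f\be2}-p)\,w=s^{\al-1}a$. The one geometric input I would isolate first is that, for $s\in S_{\te_0}$ with $\te_0<\min\{\pi,\f{\pi}{2\al}\}$, one has $\al\te_0<\f\pi2$, hence $\arg(s^\al)=\al\arg s\in(-\al\te_0,\al\te_0)\subset(-\f\pi2,\f\pi2)$ and therefore $\Re(s^\al)\ge|s|^\al\cos(\al\te_0)>0$. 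This positivity on the whole sector is exactly what makes the energy method applicable.

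Next I would pair the equation with $w$ in $\LL$ and use Plancherel to identify $((-\De)^{\f\be2}w,w)=\|w\|_\HHH^2$. Taking real parts gives
\[
\Re(s^\al)\|w\|_\LL^2+\|w\|_\HHH^2+\int_{\R^d}(-p)|w|^2\,\d x=\Re\big[s^{\al-1}(a,w)\big],
\]
where $(a,w)=\int_{\R^d}a\,\ov w\,\d x$. Since $p\le0$ and $\Re(s^\al)>0$, every term on the left is nonnegative, so I immediately obtain the two working inequalities $\|w\|_\HHH^2\le|s|^{\al-1}|(a,w)|$ and $\Re(s^\al)\|w\|_\LL^2\le|s|^{\al-1}|(a,w)|$. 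The whole problem is thereby reduced to estimating the single pairing $(a,w)$, and the two regimes $d\le\be$ and $d>\be$ correspond to two different ways of doing so. Note the potential never enters beyond the sign condition, since $\int(-p)|w|^2$ is simply discarded.

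For $d\le\be$ I would use the crude bound $|(a,w)|\le\|a\|_\LL\|w\|_\LL$. The second inequality then yields $\|w\|_\LL\le C|s|^{-1}\|a\|_\LL$ (using $\Re(s^\al)\ge c|s|^\al$), and feeding this back into the first gives $\|w\|_\HHH^2\le C|s|^{\al-2}\|a\|_\LL^2$, i.e. the stated $|s|^{\f\al2-1}$ bound. For $d>\be$ the improvement instead comes from measuring $(a,w)$ against the $\HHH$-norm itself: by Parseval and Cauchy--Schwarz with the weight $|\xi|^\be$,
\[
|(a,w)|\le\Big(\int_{\R^d}|\xi|^{-\be}|\wh a(\xi)|^2\,\d\xi\Big)^{1/2}\|w\|_\HHH .
\]
Substituting into $\|w\|_\HHH^2\le|s|^{\al-1}|(a,w)|$ and cancelling one factor of $\|w\|_\HHH$ produces exactly $\|w\|_\HHH\le C|s|^{\al-1}$ times the weighted norm of $a$.

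The hard — and genuinely dimension-dependent — step is therefore to bound that weighted integral by $\|a\|_{L^1(\R^d)}+\|a\|_\LL$. I would split at $|\xi|=1$: on $|\xi|>1$ the factor $|\xi|^{-\be}\le1$, so the contribution is $\le\|a\|_\LL^2$; on $|\xi|\le1$ I use $|\wh a|\le\|a\|_{L^1(\R^d)}$ and am left with $\int_{|\xi|\le1}|\xi|^{-\be}\,\d\xi$, which converges \emph{precisely} when $d>\be$. This low-frequency convergence is the crux of the lemma and the exact reason the two cases split where they do; I expect this weighted (Hardy-type) estimate, together with verifying the geometric positivity of $\Re(s^\al)$ on the full sector, to be the main obstacle, whereas the pairing with $w$ and the discarding of the potential term are routine.
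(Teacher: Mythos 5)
Your proposal is correct and follows essentially the same route as the paper: pair the transformed equation with $\L u$, use Plancherel and the positivity $\Re(s^\al)\ge |s|^\al\cos(\al\te_0)>0$ on $S_{\te_0}$ together with $p\le 0$ to discard the potential term, bound $|(a,\L u)|$ crudely by $\|a\|_\LL\|\L u\|_\LL$ when $d\le\be$ and by the weighted Cauchy--Schwarz with $|\xi|^{-\be}$ (split at $|\xi|=1$, using $\|\wh a\|_{L^\infty}\le\|a\|_{L^1(\R^d)}$) when $d>\be$. The only cosmetic difference is that you bootstrap through the intermediate bound $\|\L u\|_\LL\le C|s|^{-1}\|a\|_\LL$ in the case $d\le\be$, whereas the paper absorbs via a Young-type inequality with a small parameter $\ep_0$; both yield the same $|s|^{\f{\al}{2}-1}$ rate.
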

\begin{proof}
From the above argument, we see that
\begin{equation}
\label{equ-Lu}
(-\De)^{\f{\be}{2}} \L u(\cdot\,;s) + (s^\al - p(x)) \L u(\cdot\,;s) =s^{\al-1} a,
\quad \ s\in S_{\te_0}.
\end{equation}
Firstly for any fixed $s=r\e^{\i\te}\in S_{\te_0}$, we define a bilinear operator
$\B[\Phi,\Psi;s]: \HH\times \HH\to\C$ by
$$
\B[\Phi,\Psi;s]
:=\int_{\R^d} \left((-\De)^{\f{\be}{2}}\Phi(x)\right) \ov{\Psi(x)}
+ (s^\al - p(x)) \Phi(x) \ov{\Psi(x)} \d x,
$$
where $\ov{\Psi}$ denotes conjugate of $\Psi$. Consequently,
for any $s\in S_{\te_0}$, \eqref{equ-Lu} can be written as
$$
\B[\L u,\Psi;s] = (s^{\al-1} a,\Psi)_\LL, \quad \forall\,  \Psi\in \HH, \ s\in S_{\te_0}.
$$
Furthermore, taking $\Psi=\L u$ in the above equality,
from the Parseval identity and Fourier inversion transform, we have
\begin{align*}
\B[\L u,\L u;s]
&=\int_{\R^d} \left( (-\De)^{\f{\be}{2}}\L u(\xi;s) \right)^\wedge \ov{\wh{\L u}(\xi;s)} \d \xi
+ \int_{\R^d} (s^\al- p(x)) \L u(x;s) \ov{\L u(x;s)} \d x
\\
&=\int_{\R^d} \left(|\xi|^{\f{\be}{2}} \wh{\L u}(\xi;s) \right) \left( \ov{|\xi|^{\f{\be}{2}}\wh{\L u}(\xi;s)} \right)\d \xi
+ \int_{\R^d} (s^\al- p(x)) |\L u(x;s)|^2 \d x
\\
&=\int_{\R^d} (-\De)^{\f{\be}{4}} \L u(x;s) \ov{(-\De)^{\f{\be}{4}} \L u(x;s)} \d x
+ \int_{\R^d} (s^\al- p(x)) |\L u(x;s)|^2 \d x.
\end{align*}
In view of $\cos(\al\te)>0$ for $0<\te<\te_0$ and the assumption $p\le0$, it follows that
\begin{equation}
\label{esti-ReLu}
\Re \B[\L u,\L u;s]
\ge \left\|(-\De)^{\f{\be}{4}} \L u(\cdot\,;s) \right\|_\LL^2  + C|s|^\al\|\L u(\cdot\,;s)\|_\LL.
\end{equation}
In the case of $d>\be$, we conclude from \eqref{esti-ReLu} that
$$
\|\L u(\cdot\,;s)\|_{\dot{H}^\f{\be}{2}(\R^d)}^2
\le C\big|\B[\L u,\L u;s]\big|
\le C|s|^{\al-1}|(a, \L u(\cdot\,;s))_\LL|.
$$
We note that
$$
(a, \L u(\cdot\,;s))_\LL
=\int_{\R^d} a(x) \ov{\L u(x;s)} \d x
=\int_{\R^d}\left( |\xi|^{-\f{\be}{2}}\wh a(\xi) \right) \left(|\xi|^{\f{\be}{2}}\ov{\wh{\L u}(\xi;s)}\right) \d\xi,
$$
from the H\"older inequality, we see that
$$
\|\L u(\cdot\,;s)\|_{\dot{H}^\f{\be}{2}(\R^d)}^2
\le C|s|^{\al-1} \left(\int_{\R^d} |\xi|^{-\be} |\wh a(\xi)|^2 \d\xi \right)^{\f 1 2}
\left(\int_{\R^d}|\xi|^{\be} \left|\wh{\L u}(\xi;s)\right|^2\d\xi \right)^{\f 1 2},
$$
which implies
$$
\|\L u(\cdot\,;s)\|_{\dot{H}^\f{\be}{2}(\R^d)}
\le C|s|^{\al-1} \left(\int_{\R^d} |\xi|^{-\be} |\wh a(\xi)|^2 \d\xi \right)^{\f 1 2},
\quad s\in S_{\te_0}.
$$
Breaking the integral on the right-hand side of the above estimate into $\{|\xi|<1\}$ and $\{|\xi|\ge1\}$, in view of
$\|\wh a\|_{L^\infty(\R^d)}\le \|a\|_{L^1(\R^d)}$, it follows that
$$
\int_{\R^d} |\xi|^{-\be} |\wh a(\xi)|^2 \d\xi
\le \|a\|_{L^1(\R^d)}^2 \int_{|\xi|<1} |\xi|^{-\be} \d\xi
+\int_{|\xi|\ge1} |\wh a(\xi)|^2\d\xi
\le C(\|a\|_{L^1(\R^d)}^2+\|a\|_\LL^2),
$$
hence that
$$
\|\L u(\cdot\,;s)\|_{\dot{H}^\f{\be}{2}(\R^d)}
\le C|s|^{\al-1} (\|a\|_{L^1(\R^d)}+\|a\|_\LL),
\quad s\in S_{\te_0},\ d>\be.
$$
Next for $d\le\be$, noting that
$\|a\|_\LL\|\L u\|_\LL\le \f{1}{4\ep_0}|s|^{-1}\|a\|_\LL^2 +\ep_0|s|\|\L u\|_\LL^2$ with sufficiently small
constant $\ep_0>0$, we see from \eqref{esti-ReLu} that
$$
\|\L u(\cdot\,;s)\|_\HHH
\le C|s|^{\f{\al}{2}-1} \|a\|_\LL,\quad s\in S_{\te_0},\ d\le\be.
$$
The proof of Lemma \ref{lem-esti-Lu} is completed.
\end{proof}

Now let us turn to finishing the proof of the long-time asymptotic behavior of
the solution $u$ to the Cauchy problem \eqref{equ-cauchy}.
\begin{proof}[\bf Proof of Theorem \ref{thm-asymp}]
By the Fourier-Mellin formula (e.g., \cite{S91}), we have
$$
u(x,t) = \frac{1}{2\pi \i}\int_{M-\i\infty}^{M+\i\infty} \L u(x;s) \e^{st} \d s.
$$
From Lemma \ref{lem-esti-Lu}, we see that the Laplace transform $\L u(x;s)$ of
the solution to the equation \eqref{equ-Lu} is analytic in the sector $S_{\te_0}$.
Therefore by the Residue Theorem (e.g., \cite{W86}), for $t>0$ we see that
the inverse Laplace transform of $\L u$ can be represented by an integral
on the contour $\ga(\ep,\te_0)$ defined as
$\{s\in\C: \arg s=\te_0,\ |s|\ge \ep\}\cup \{s\in\C: |\arg s|\le\te_0,\ |s|= \ep\}$, that is
$$
u(x,t)
=\frac{1}{2\pi\i}\int_{\ga(\ep,\te_0)} \L u(x;s) \e^{s t} \d s,
$$
where the shift in the line of integration is justified by the estimate \eqref{esti-lap-u}.
Moreover, again from the estimate \eqref{esti-lap-u}, we can let $\ep$ tend to $0$,
then we have
$$
u(x,t)=\frac{1}{2\pi\i}\int_{\ga(0,\te_0)} \L u(x;s)\e^{s t} \d s.
$$
Thus
\begin{equation}\label{esti-u-v}
\|u(\cdot\,,t)\|_\HHH
\le C\int_{\ga(0,\te_0)}\|\L u(\cdot\,;s)\|_\HHH |\e^{st} \d s|,
\end{equation}
hence combining \eqref{esti-lap-u} with \eqref{esti-u-v} gives
$$
\|u(\cdot\,,t)\|_\HHH
\le C\int_0^\infty r^{\f{\al}{2}-1} \|a\|_\LL \e^{rt\cos\te_0} \d r
\le Ct^{-\f{\al}{2}}\|a\|_\LL, \quad t>0,\ d\le\be,
$$
and
$$
\|u(\cdot\,,t)\|_\HHH
\le C\int_0^\infty r^{\al-1} \|a\|_\LL \e^{rt\cos\te_0} \d r
\le Ct^{-\al}(\|a\|_{L^1(\R^d)}+\|a\|_\LL), \quad t>0,\ d>\be.
$$
Now let us turn to consider the asymptotic behavior of the solution $u$
in the case of $p<-\de_0$, where $\de_0$ is a positive constant.
We repeat the above argument to derive that the Laplace transform
$\L v$ of the solution $v$ to the equation \eqref{equ-ellip-v} such that
\begin{equation}
\label{equ-Lv}
(-\De)^{\f{\be}{2}} \L v(\cdot\,;s) - p(x) \L v(\cdot\,;s) = s^{\al-1} a,\quad s\in S_{\te_0}.
\end{equation}
We define a bilinear operator $\B_1[\Phi,\Psi]: \HH\times \HH\to\C$
corresponding the equation \eqref{equ-Lv} by
$$
\B_1[\Phi,\Psi]
:=\int_{\R^d} \left((-\De)^{\f{\be}{2}}\Phi(x)\right) \ov{\Psi(x)} - p(x) \Phi(x) \ov{\Psi(x)} \d x.
$$
By noting $p(x)<-\de_0<0$ in $\R^d$, similar to the proof used in Lemma \ref{lem-esti-Lu},
we find
$$
\|\L v(\cdot\,;s)\|_{\HH}^2
\le \Re \B_1[\L v,\L v]
\le C|s|^{\al-1}\|a\|_{\LL}\|\L v(\cdot;s)\|_\LL, \quad s\in S_{\te_0}.
$$

Noting that $\L u - \L v$ satisfies the following problem
\begin{equation*}
(-\De)^{\f{\be}{2}} (\L u(\cdot\,;s)-\L v(\cdot\,;s)) +(s^\al- p(x))(\L u(\cdot\,;s) - \L v(\cdot\,;s))
= s^\al \L v(\cdot\,;s),\quad s\in S_{\te_0}.
\end{equation*}
Then again by the argument used in the evaluation of \eqref{esti-lap-u}, we deduce that
$$
\|\L u(\cdot\,;s) - \L v(\cdot\,;s)\|_\HH
\le C|s|^\al \|\L v(\cdot\,;s)\|_\LL.
$$
Consequently
\begin{align*}
\|u(\cdot\,,t) - v(\cdot,t)\|_\HH
\le \ & C\int_{\ga(0,\te_0)}\|\L (u(\cdot\,;s) - v(\cdot;s))\|_\HH |\e^{st} \d s|\\
\le\ & C\int_0^\infty r^{2\al-1} \|a\|_\LL \e^{rt\cos\te_0} \d r
\le Ct^{-2\al}\|a\|_\LL, \quad t>0.
\end{align*}
This completes the proof of Theorem \ref{thm-asymp}.
\end{proof}

\begin{proof}[\bf Proof of Corollary \ref{coro-asymp}]
The relation
$$
\|t^\al u(\cdot,t) - t^\al v(\cdot,t)\|_\HH = t^\al O(t^{-2\al})\|a\|_\LL \to 0,\ \ t\to\infty
$$
follows easily from the estimate \eqref{esti-asymp-u} in Theorem~\ref{thm-asymp}.
By the assumption of Corollary \ref{coro-asymp}, $\|t^\al u(\cdot,t)\|_\HH\to 0$
as $t \to \infty$, which implies the relation $t^\al v(\cdot,t) \to 0$ as $t\to\infty$ under the norm
$\|\cdot\|_\HH$. By noting that $v$ solves the equation \eqref{equ-ellip-v}, it follows that
$$
B_1[ v(\cdot,t),\vp]=\f{t^{-\al}}{\Ga(1-\al)} (a,\vp)_\LL, \quad \vp\in \HH,
$$
which implies
$$
B_1[ tv(\cdot,t),\vp]=\f{1}{\Ga(1-\al)} (a,\vp)_\LL,\quad \vp\in\HH.
$$
Letting $t\to\infty$, we have
$$
\f{a}{\Ga(1-\al)} = 0 \mbox{ in $\R^d$}.
$$
In this case, the Cauchy problem \eqref{equ-cauchy} has only the trivial solution,
e.g., $u(\cdot,t) = 0$ in $\R^d$ and $t>0$. This completes the proof of Corollary
\ref{coro-asymp}.
\end{proof}
\begin{rem}
The existence of the unique solution $v$ to the equation \eqref{equ-ellip-v} can be easily shown
by using the Lax-Milgram theorem. 
Indeed, we define a bilinear operator $\B_1[\Phi,\Psi]: \HH\times \HH\to\C$
corresponding the equation \eqref{equ-ellip-v} by
$$
\B_1[\Phi,\Psi]
:=\int_{\R^d} \left((-\De)^{\f{\be}{2}}\Phi(x)\right) \Psi(x) - p(x) \Phi(x) \Psi(x) \d x.
$$
Similar to the proof used in Lemma \ref{lem-esti-Lu}, we see that
$$
|\B_1[\Phi,\Psi]|\le C\|\Phi\|_\HH \|\Psi\|_\HH,
$$
moreover, by noting that $p(x)<-\de_0<0$ in $\R^d$, we have
$$
\B_1[\Phi,\Phi] \ge C\|\Phi\|_\HH^2.
$$
Then the Lax-Milgram theorem shows that there is a unique solution $v\in\HH$ to the equation
\eqref{equ-ellip-v} which satisfies
$$
\B_1[v(\cdot,t),\vp] = \f{t^{-\al}}{\Ga(1-\al)} (a,\vp)_\LL,\quad \vp\in \HH.
$$
Thus by taking $\vp=v$, we have
$$
\|v(\cdot,t)\|_\HH^2 \le C\B_1[v(\cdot,t),v(\cdot,t)] \le \f{Ct^{-\al}}{\Ga(1-\al)} |(a,v(\cdot,t))_\LL|.
$$
Hence
$$
\|v(\cdot,t)\|_\HH \le Ct^{-\al} \|a\|_\LL.
$$
\end{rem}

\section{Asymptotic estimate in a bounded domain}
\label{sec-asymp-bdd}
In this section, we will prove Theorem 1.3.

We first show two useful lemmas.
\begin{lem}
The operator $\AA - p$ is positive and self-adjoint.
\end{lem}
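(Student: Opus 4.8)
The plan is to establish the two claimed properties of the operator $\AA - p$ separately, where throughout we work in $\LO$ with $A = -\De$ equipped with $D(A) = H^2(\Om)\cap H^1_0(\Om)$, so that $A$ is self-adjoint and positive with $(Au,u)\ge\de_0\|u\|_{\LO}^2$. The key structural fact I would invoke is the spectral theory of $A$: since $A$ is a positive self-adjoint operator with compact resolvent, it admits an orthonormal eigenbasis $\{\vp_k\}_{k=1}^\infty$ of $\LO$ with eigenvalues $0<\de_0\le\la_1\le\la_2\le\cdots\to\infty$. The fractional power $\AA$ is then defined via the spectral calculus by $\AA u = \sum_{k=1}^\infty \la_k^{\be/2}(u,\vp_k)_{\LO}\vp_k$ on the domain $D(\AA) = \{u\in\LO : \sum_k \la_k^{\be}|(u,\vp_k)_{\LO}|^2<\infty\}$, and this is the definition of $A^{\be/2}$ referenced via \cite{P92}.

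\textbf{Self-adjointness.} First I would observe that $\AA$ is self-adjoint: this is immediate from the spectral representation, since functions of a self-adjoint operator defined through the spectral calculus are again self-adjoint, and the eigenvalues $\la_k^{\be/2}$ are real. The potential term $p$ acts as a multiplication operator $u\mapsto p(x)u$; since $p\in L^\infty(\Om)$ is real-valued, this is a bounded self-adjoint operator on $\LO$. A bounded self-adjoint perturbation of a self-adjoint operator is self-adjoint on the same domain $D(\AA)$ (by the Kato--Rellich theorem, or directly since $D(\AA - p) = D(\AA)$ and symmetry is preserved), so $\AA - p$ is self-adjoint.

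\textbf{Positivity.} For any $u\in D(\AA)$, I would compute the quadratic form directly from the spectral representation:
\begin{equation}
(\AA u, u)_{\LO} = \sum_{k=1}^\infty \la_k^{\f{\be}{2}}|(u,\vp_k)_{\LO}|^2 \ge \la_1^{\f{\be}{2}}\sum_{k=1}^\infty |(u,\vp_k)_{\LO}|^2 = \la_1^{\f{\be}{2}}\|u\|_{\LO}^2,
\end{equation}
using $\la_k\ge\la_1\ge\de_0>0$. For the potential, the hypothesis $p(x)\le0$ gives $(-pu,u)_{\LO} = \int_\Om (-p(x))|u(x)|^2\,\d x\ge0$. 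Adding these,
\begin{equation}
((\AA - p)u, u)_{\LO} \ge \la_1^{\f{\be}{2}}\|u\|_{\LO}^2 > 0 \quad \text{for } u\ne0,
\end{equation}
which establishes that $\AA - p$ is positive (indeed strictly positive with coercivity constant $\la_1^{\be/2}\ge\de_0^{\be/2}$).

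\textbf{The main obstacle} I anticipate is purely a matter of bookkeeping rather than depth: one must confirm that $D(\AA)$ is the correct common domain and that adding the bounded operator $p$ does not disturb self-adjointness or shrink the form domain. Because $p$ is bounded, the domain question is harmless, so the only genuine care needed is verifying that the spectral representation of $\AA$ is legitimate, i.e.\ that $A^{\be/2}$ as defined in \cite{P92} coincides with the spectral-calculus expression above, and that the eigenvalue lower bound $\la_1\ge\de_0$ follows from the assumed coercivity $(Au,u)\ge\de_0\|u\|_{\LO}^2$. Once these identifications are recorded, both conclusions follow in a few lines.
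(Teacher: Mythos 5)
Your proposal is correct and follows essentially the same route as the paper: the paper also obtains self-adjointness of $\AA - p$ by viewing $-p$ as a bounded (symmetric) perturbation of the self-adjoint operator $\AA$ (citing Kato, Theorem 4.3, p.287), and deduces positivity from $p\le 0$ together with the positivity of $\AA$. Your version merely spells out, via the spectral representation, the coercivity constant $\la_1^{\be/2}\ge\de_0^{\be/2}$ that the paper leaves implicit.
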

\begin{proof}
Since $\AA$ is positive and self-adjoint, noting that the perturbation by $p$ 
is bounded, we see by Theorem 4.3 (p.287) Kato \cite{K76} that 
$\AA - p$ is self-adjoint.
The positivity follows from $p\le 0$.
\end{proof}
\begin{lem}
\label{lem-equiv}
There exist constants $C_1, C_2 > 0$ such that
$$
C_1\| \AA a\|_\LO \le \| -\AA a + pa\|_{\LO}
\le C_2\| \AA a\|_{\LO}
$$
for all $a \in D(\AA)$.
\end{lem}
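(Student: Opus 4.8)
The plan is to derive both inequalities from the triangle inequality together with a single spectral lower bound for $\AA$. Observe first that the two quantities are linked by $-\AA a + pa = -(\AA - p)a$, so the assertion is precisely the equivalence of the seminorms $\|\AA a\|_\LO$ and $\|(\AA - p)a\|_\LO$ on the common domain $D(\AA)$. The structural fact that drives everything is coercivity: since $A=-\De$ with Dirichlet data has a strictly positive first eigenvalue $\la_1>0$, the spectral theorem gives $\AA \ge \la_1^{\f{\be}{2}} I$, and because $-p\ge0$ pointwise (as $p\le0$) we also get $\AA - p \ge \AA \ge \la_1^{\f{\be}{2}} I$. Thus $\AA$ and $\AA-p$ are bounded below by the same positive constant, which lets me control $\|a\|_\LO$ by either $\|\AA a\|_\LO$ or $\|(\AA-p)a\|_\LO$.

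For the upper bound I would simply write $-\AA a + pa = \AA a - pa$ and apply the triangle inequality, giving $\|-\AA a + pa\|_\LO \le \|\AA a\|_\LO + \|p\|_{L^\infty(\Om)}\|a\|_\LO$. It then remains to absorb $\|a\|_\LO$ into $\|\AA a\|_\LO$, which is immediate from $\|\AA a\|_\LO \ge \la_1^{\f{\be}{2}}\|a\|_\LO$; this yields the bound with $C_2 = 1 + \la_1^{-\f{\be}{2}}\|p\|_{L^\infty(\Om)}$.

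For the lower bound I argue symmetrically: $\|\AA a\|_\LO \le \|-\AA a + pa\|_\LO + \|p\|_{L^\infty(\Om)}\|a\|_\LO$, and now I estimate $\|a\|_\LO$ through $\|-\AA a + pa\|_\LO$. Since $\AA-p$ is positive and self-adjoint by the preceding lemma and satisfies $\AA-p \ge \la_1^{\f{\be}{2}} I$, the elementary chain $\|(\AA-p)a\|_\LO\,\|a\|_\LO \ge ((\AA-p)a,a) \ge \la_1^{\f{\be}{2}}\|a\|_\LO^2$ gives $\|a\|_\LO \le \la_1^{-\f{\be}{2}}\|-\AA a + pa\|_\LO$, hence the lower bound with $C_1 = \big(1 + \la_1^{-\f{\be}{2}}\|p\|_{L^\infty(\Om)}\big)^{-1}$. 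The only genuinely substantive point, and therefore the thing to verify carefully, is the coercivity $\AA-p \ge \la_1^{\f{\be}{2}} I$ on the common domain $D(\AA-p)=D(\AA)$; this rests on the positivity of $\AA-p$ supplied by the previous lemma together with the sign condition $p\le0$, while everything else is a routine application of the triangle inequality and Cauchy--Schwarz.
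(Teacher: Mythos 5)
Your proof is correct and follows essentially the same route as the paper: both inequalities come from the triangle inequality applied to $-\AA a+pa=-(\AA-p)a$ together with lower bounds of the form $\|\AA a\|_\LO\ge c\|a\|_\LO$ and $\|(\AA-p)a\|_\LO\ge c\|a\|_\LO$. The only (cosmetic) difference is that the paper obtains the latter bound from the boundedness of the resolvent $(-\AA+p)^{-1}$ at $0$, whereas you derive it explicitly via Cauchy--Schwarz and the coercivity $\AA-p\ge\AA\ge\de_0^{\f{\be}{2}}I$, which also yields explicit constants.
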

\begin{proof}
First we note that
$\| a\|_{\LO} \le C\| \AA a\|_{\LO}$ for $a \in D(\AA)$.  Therefore
$$
\| -\AA a + pa\|_{\LO} \le \| \AA a\|_{\LO} + C\| a\|_{\LO}
\le C\| \AA a\|_{\LO}
$$
for $a \in D(\AA)$, which is the second inequality. Next since $\AA - p$ is positive,
$0 \in \rho(-\AA + p)$: the resolvent. Therefore $(-\AA + p)^{-1}: L^2(\Om) \to L^2(\Om)$
is bounded.  We set $b = (-\AA + p)a$, and $a = (-\AA + p)^{-1}b$.  Then
\begin{align*}
& \| \AA a\|_\LO = \|\AA(-\AA + p)^{-1}b\|_\LO
= \| (\AA - p)(-\AA + p)^{-1}b + p(-\AA + p)^{-1}b\|_\LO
\\
\le& \| b\|_\LO + \| p(-\AA + p)^{-1}b\|_\LO
\le \| b\|_\LO + C\| b\|_\LO,
\end{align*}
that is,
$$
\| \AA a\|_\LO \le (1+C)\| (-\AA + p)a\|_\LO,
$$
which is the first inequality.  Thus the proof of the lemma is
completed.
\end{proof}

In view of Lemma \ref{lem-equiv}, we apply the Heinz-Kato inequality (e.g., Tanabe \cite{Ta79})
and have
$$
D((-\AA+p)^{\te}) = D(A^{\f{\be}{2}\te})
$$
and
$$
C_1\| A^{\f{\be}{2}\te}a\|_\LO
\le \| (-\AA + p)^{\te}a\|_\LO
\le C_2\| A^{\f{\be}{2}\te}a\|_\LO, \quad
a\in D(A^{\f{\be}{2}\te}), \ 0\le \te
\le \f{2}{\be},
$$
that is,
\begin{equation}
\label{esti-equiv-theta}
C_1\| A^{\f{\ga}{2}}a\|_\LO
\le \| (-\AA + p)^{\f{\ga}{\be}}a\|_\LO
\le C_2\| A^{\f{\ga}{2}}a\|_\LO
\end{equation}
and
$$
D((-\AA+p)^{\f{\ga}{\be}}) = D(A^{\f{\ga}{2}}),
\ 0\le \ga \le 2.
$$
Let $\la_n$ be the eigenvalue of $\AA - p$: $0 < \la_1 \le \la_2 \le \la_3 \le \cdots$, where $\la_j$
appears the same times as the multiplicity.  Let $\vp_n$, $n \in \N$, be an orthonormal eigenvector
for $\la_n$.  Then $\{ \vp_n\}_{n\in \N}$ is an orthonormal basis in $\LO$.
By \eqref{esti-equiv-theta}, we have
\begin{equation}
\label{esti-equiv-gamma}
C_1\| a\|_{H^{\ga}(\Om)}
\le \left( \sum_{n=1}^{\infty} \la_n^{\f{2\ga}{\be}}
(a,\vp_n)^2\right)^{\f 1 2}
\le C_2\| a\|_{H^{\ga}(\Om)}
\end{equation}
for $a \in D(A^{\f{\ga}{2}})$ and $0 \le \ga \le 2$.

Now we proceed to 
\begin{proof}[\bf Proof of Theorem \ref{thm-asymp-bdd}]
Similarly to Sakamoto and Yamamoto \cite{SaYa11}, we represent the solution $u$ by
$$
u(t) = \sum_{n=1}^{\infty} (a,\vp_n)E_{\al,1}(-\la_nt^{\al})\vp_n
$$
in $C((0,T]; D(-\AA+p))$.  By \eqref{esti-equiv-gamma}, we estimate $\| u(t)\|_{H^{\ga}(\Om)}$:
\begin{align*}
& \| u(t)\|^2_{H^{\ga}(\Om)}
\le C\sum_{n=1}^{\infty} \la_n^{\f{2\ga}{\be}}
(a,\vp_n)^2  |E_{\al,1}(-\la_nt^{\al})|^2\\
\le& C\sum_{n=1}^{\infty} (a,\vp_n)^2
\left(\f{\la_n^{\f{\ga}{\be}}} {1+\la_n t^{\al}}\right)^2
= Ct^{-\f{2\al\ga}{\be}}
\sum_{n=1}^{\infty} (a,\vp_n)^2
\left( \f{(\la_nt^{\al})^{\f{\ga}{\be}}} {1+\la_n t^{\al}}\right)^2.
\end{align*}

Let $\gamma < \beta$.  Then $\frac{\eta^{\frac{\gamma}{\beta}}}
{1+\eta}$ is monotone decreasing for $\eta > \frac{\gamma}{\beta-\gamma}$.
For sufficiently large $t>0$, we can assume that 
$\lambda_1 t^{\alpha} > \frac{\gamma}{\beta-\gamma}$.
Therefore
$$
\frac{(\lambda_nt^{\alpha})^{\frac{\gamma}{\beta}}}
{1+\lambda_nt^{\alpha}} 
\le \frac{\lambda_1^{\frac{\gamma}{\beta}}t^{\frac{\alpha\gamma}{\beta}}}
{1+\lambda_1t^{\alpha}} 
\le Ct^{\frac{\alpha\gamma}{\beta}}t^{-\alpha},
$$
that is, 
$$
\Vert u(t)\Vert_{H^{\gamma}(\Omega)}^2
\le Ct^{-2\alpha}\sum_{n=1}^\infty (a,\vp_n)^2
$$
for large $t>0$.  For $\gamma=\beta$, since $\frac{\lambda_nt^{\alpha}}
{1+\lambda_nt^{\alpha}} \le 1$, we can directly see the conclusion of the 
theorem.  Thus the proof of the theorem is completed.
\end{proof}

\begin{proof}[\bf Proof of \eqref{notin}]
Let $\ga > \be$.  By Theorem 1.4 (pp.33-34) in Podlubny \cite{P99}, we have
$$
E_{\al,1}(-\la_nt^{\al})
= \f{1}{\la_nt^\al}\f{1}{\Ga(1-\al)} + O\left( \f{1}{\la_n^2t^{2\al}}\right).
$$
We set $\f{\ga}{\be} = 1 + \de$ with small $\de>0$ and set
$u_N(t) = \sum_{n=1}^N (a,\vp_n)E_{\al,1}(-\la_nt^{\al})\vp_n$.
Then
\begin{align*}
&\| u_N(t)\|_{H^{\ga}(\Om)}^2
= \sum_{n=1}^N \la_n^{2+2\de}(a,\vp_n)^2
|E_{\al,1}(-\la_nt^{\al})|^2\\
=& \sum_{n=1}^N \la_n^{2+2\de}(a,\vp_n)^2
\left| \f{1}{\la_nt^\al}\f{1}{\Ga(1-\al)}
+ O\left( \f{1}{\la_n^2t^{2\al}}\right)\right|^2\\
=& \sum_{n=1}^N \la_n^{2+2\de}(a,\vp_n)^2
\left( \left(\f{1}{\la_nt^{\al}}\f{1}{\Ga(1-\al)}
\right)^2
+ O\left( \f{1}{\la_n^3t^{3\al}}\right)\right)\\
\ge & \left(\sum_{n=1}^N \la_n^{2\de}(a,\vp_n)^2\right)
t^{-2\al}\left( \f{1}{\Ga(1-\al)}\right)^2
-  C\sum_{n=1}^N \la_n^{2\de-1}(a,\vp_n)^2t^{-3\al}.
\end{align*}
Here we note
$$
\sum_{n=1}^N \la_n^{2\de-1}(a,\vp_n)^2t^{-3\al}
\le \sum_{n=1}^\infty \la_n^{2\de-1}(a,\vp_n)^2t^{-3\al}
< \infty
$$
because $\de>0$ is small so that $2\de-1<0$.
Thus we verified that $a \not\in H^{2\de}(\Om)$
implies $\lim_{N\to\infty} \sum_{n=1}^N (a,\vp_n)^2\la_n^{2\de}
= \infty$ and so $u(t) \not\in H^{\ga}(\Om)$.
\end{proof}

\section{Conclusions and remarks}
\label{sec-rem}

In Lemma \ref{lem-analy-u}, we proved that the solution
$u(\cdot,t): (0,T)\to H^\ga(\R^d)$ $(0\le\ga<\be)$ is analytic in time $t\in(0,T)$
and that the short-time estimate $\|u(\cdot,t)\|_{H^\ga(\R^d)}\le C\e^{CT}t^{-\f{\ga}{\be}\al}\|a\|_\LL$
as $t\to0$. Moreover, $u:(0,T)\to H^\ga(\R^d)$ can be analytically extended to the sector
$S_{\te_0}=\{z\in\C\setminus\{0\}: |\arg z|<\te_0\}$ with $\te_0$ such that $\f{\pi}{2}<\te_0<\min\{\pi,\tfrac{\pi}{2\al}\}$.
However, we do not know whether the solution can achieve the sharp spatial regularity $H^{\be}(\R^d)$.

\bigskip

\noindent In Theorem \ref{thm-asymp}, we discuss the long-time asymptotic behavior of the solution
to the Cauchy problem \eqref{equ-cauchy}. We show that the decay rate of the solution is dominated
by $t^{-\f{\al}{2}}$ in the case of $d\le\be$ and by $t^{-\al}$ for $d>\be$ under the assumption that the potential $p$ is in the space $L^\infty(\R^d)$ and
such that $p\le0$ in $\R^d$.
However, from \eqref{def-S} and the property of the Mittag-Leffler function, it is not very difficult to see that the solution $u$ to the Cauchy problem \eqref{equ-cauchy} admits
the sharp estimate
$$
\|u(\cdot,t)\|_{\HHH} \le Ct^{-\f{\al}{2}} \|a\|_\LL,\quad d\ge1,\ 0<\be\le2,
\mbox{ as $t\to\infty$}
$$
and
$$
\|u(\cdot,t)\|_{\HHH} \le Ct^{-\al} (\|a\|_{L^1(\R^d)}+\|a\|_\LL),\quad d>\be,
\mbox{ as $t\to\infty$}
$$
provided that $p\equiv0$ in $\R^d$, that is the estimate in
Theorem \ref{thm-asymp}. However, we do not intend to discuss
whether the first result in Theorem \ref{thm-asymp} is sharp
in the case of $p\le0$. If we assume $p<-\de_0<0$, the decay rate
is shown to be $t^{-\al}$ and sharp.

\bigskip

\noindent In Theorem \ref{thm-asymp-bdd}, the long-time asymptotic behavior of the solution
to the initial-boundary value problem is investigated. By the property of the Mittag-Leffler function
and the eigenfunction expansion, the solution to (1.5) decays as $t^{-\al}$
which is different from the case of unbounded domain, where in the case of $\Om=\R^d$
the decay rate is shown to be dependent of the relation between $\be$ and $d$.

\section*{Acknowledgement}
The research of Xing Cheng is supported in part by ``the Fundamental Research Funds for
the Central Universities"(No.2014B14214). Zhiyuan Li thanks the Leading Graduate Course for
Frontiers of Mathematical Sciences and Physics (FMSP, The University of Tokyo).
The works was partly supported by A3 Foresight Programs \lq \lq Modeling and Computation of
Applied Inverse Problems\rq\rq\  by Japan Society of the Promotion of Science.


\end{document}